\pdfoutput=1
\documentclass[11pt]{amsart}

\usepackage[T1]{fontenc}
\usepackage[utf8]{inputenc}
\usepackage{amsmath,amssymb,amsthm,mathtools}
\usepackage{lmodern}
\usepackage[margin=1in]{geometry}
\usepackage{enumitem}
\usepackage{microtype}
\usepackage[colorlinks=true,linkcolor=blue,citecolor=blue,urlcolor=blue]{hyperref}

\numberwithin{equation}{section}

\theoremstyle{plain}
\newtheorem{theorem}{Theorem}[section]
\newtheorem{proposition}[theorem]{Proposition}
\newtheorem{lemma}[theorem]{Lemma}

\theoremstyle{definition}
\newtheorem{question}[theorem]{Question}

\newcommand{\C}{\mathbb{C}}
\newcommand{\R}{\mathbb{R}}
\newcommand{\Z}{\mathbb{Z}}
\newcommand{\PP}{\mathbb{P}}
\newcommand{\cO}{\mathcal{O}}
\newcommand{\dbar}{\bar\partial}
\newcommand{\half}{\tfrac12}
\newcommand{\iso}{\xrightarrow{\sim}}

\DeclareMathOperator{\Ric}{Ric}
\DeclareMathOperator{\Aut}{Aut}
\DeclareMathOperator{\Kur}{Kur}
\DeclareMathOperator{\Alb}{Alb}
\DeclareMathOperator{\Spec}{Spec}
\DeclareMathOperator{\Vol}{Vol}

\title[Non-integrable Einstein deformations]{A negative K\"ahler-Einstein threefold with non-integrable infinitesimal Einstein deformations}
\author[A. Krishna]{Ari Krishna}
\address{Department of Mathematics, Harvard University, Cambridge, Massachusetts 02138, USA}

\subjclass[2020]{53C25, 32G05, 14J30}
\keywords{K\"ahler-Einstein metrics, infinitesimal Einstein deformations, Kuranishi spaces, obstructed deformations, canonically polarized varieties}
\date{}

\begin{document}

\begin{abstract}
We construct a smooth canonically polarized threefold, not biholomorphic to a product of positive-dimensional varieties, whose normalized K\"ahler-Einstein metric admits a non-integrable infinitesimal Einstein deformation. The same tangent direction is non-integrable as an infinitesimal complex deformation. In fact, the space of infinitesimal Einstein deformations in our example has real dimension $8$, its integrable directions form a real $6$-dimensional subspace, and every direction outside that subspace is obstructed. This answers both parts of a suitably generalized version of a question posed by Dai, Wang, and Wei in real dimension $6$.
\end{abstract}

\maketitle

\section{Introduction}

Let $(M,g)$ be a compact Einstein manifold. An \emph{infinitesimal Einstein deformation} (IED) is a transverse-traceless symmetric $2$-tensor in the kernel of the linearized Einstein operator. It is \emph{integrable} if it is tangent, modulo diffeomorphisms and homotheties, to a smooth curve of Einstein metrics. If $(M,g,J)$ is K\"ahler-Einstein, one may also consider \emph{infinitesimal complex deformations} (ICDs), namely tangent directions to the space of complex structures. These are parametrized by $H^1(M,T_M)$, and their integrability is governed by the Kuranishi space.

Schwahn and Semmelmann \cite[\S6.3]{SchwahnSemmelmann} formulate the following version of a question of Dai, Wang, and Wei \cite{DaiWangWei}.

\begin{question}[Dai-Wang-Wei; Schwahn-Semmelmann]\label{q:DWW}
Does there exist a compact K\"ahler-Einstein manifold $(M,g,J)$ of negative scalar curvature, with $\dim_{\R}M\neq4$, admitting a non-integrable infinitesimal complex deformation or a non-integrable infinitesimal Einstein deformation?
\end{question}

When the Einstein constant is negative, Koiso gives a real-linear isomorphism between $H^1(M,T_M)$ and the space of IEDs \cite{Koiso}. An obstructed ICD, however, could conceivably integrate through Einstein metrics that cease to be K\"ahler, and holonomy rigidity for nonzero K\"ahler-Einstein metrics is not known in full generality \cite[\S6.3]{SchwahnSemmelmann}. Moreover, every IED of a negative K\"ahler-Einstein metric is integrable to second Einstein order \cite[Thm.~5.3]{NagySemmelmann}. Thus, a quadratic complex obstruction is not a second-order Einstein obstruction.

Nagy's recent computation of the third-order Einstein equation \cite{NagyThird} implies that a nonzero primary Kodaira-Spencer obstruction prevents the corresponding IED from extending to third Einstein order, even along a curve of non-K\"ahler-Einstein metrics. Combining this with an equivariant construction from a rigid but not infinitesimally rigid surface yields the following result whose proof we establish in this paper.

\begin{theorem}\label{thm:main}
There exist a smooth projective threefold $X$, a class $\alpha_X\in H^1(X,T_X)$, and a K\"ahler-Einstein metric $g$ on $X$ with $\Ric_g=-g$ such that:
\begin{enumerate}[label=\textup{(\roman*)},leftmargin=2.3em]
\item $K_X$ is ample, and $X$ is not biholomorphic to a product of positive-dimensional varieties;
\item $[\alpha_X,\alpha_X]\neq0$ in $H^2(X,T_X)$, so $\alpha_X$ is a non-integrable ICD;
\item the IED corresponding to $\alpha_X$ under Koiso's isomorphism is non-integrable, and is obstructed at third Einstein order.
\end{enumerate}
More precisely, for a smooth curve $B$ of genus $2$ there is a decomposition
$$H^1(X,T_X)=\C\alpha_X\oplus H^1(B,T_B),$$
and the ICD and IED associated with $\lambda\alpha_X+\beta$ are integrable if and only if $\lambda=0$. Hence, the real $8$-dimensional space of IEDs has a real $6$-dimensional subspace consisting precisely of its integrable directions.
\end{theorem}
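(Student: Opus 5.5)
We will build $X$ as an étale quotient $X=(S\times B)/G$. Here $S$ is a surface of general type that is rigid but not infinitesimally rigid: its Kuranishi space is a nonreduced point, so some class $\alpha_S\in H^1(S,T_S)$ has $[\alpha_S,\alpha_S]\neq0$. Examples of this kind exist among Beauville-type surfaces and among suitable quotients of products of curves. $B$ is a genus $2$ curve, and $G$ is a finite group acting freely on $S\times B$. We will choose the $G$-action on $S$ so that $H^1(S,T_S)^G$ contains the obstructed direction. Namely, we want $H^1(S,T_S)^G=\C\alpha_S$, with $\alpha_S$ spanning an eigenline on which the cup-bracket $H^1\times H^1\to H^2$ lands in a nonzero part of $H^2(S,T_S)^G$. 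For instance, take $\alpha_S$ invariant with $[\alpha_S,\alpha_S]$ invariant and nonzero.

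The Künneth formula gives
$$H^1(S\times B,T)=H^1(S,T_S)\oplus H^0(S,T_S)\otimes H^1(B,\cO_B)\oplus H^1(B,T_B)\oplus H^0(B,T_B)\otimes H^1(S,\cO_S).$$
The second summand vanishes because $S$ is of general type, and the fourth vanishes because $g(B)=2$. We will arrange that $G$ acts trivially on $H^1(B,T_B)$, for example by letting $G$ act on $S$ only, or through a subgroup of $\Aut(B)$ acting trivially on $H^1(T_B)$; the freeness of the action is then arranged on $S$. Taking $G$-invariants gives the decomposition $H^1(X,T_X)=\C\alpha_X\oplus H^1(B,T_B)$. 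Ampleness of $K_X$ follows from ampleness of $K_S\boxtimes K_B$ and étaleness. Non-product follows from a nontrivial diagonal action together with uniqueness of the de Rham/Albanese-type splitting of the universal cover (Beauville's decomposition for products of curves, or the holonomy splitting of the KE metric). The Kähler-Einstein metric on $X$ is the descent of the product KE metric.

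For (ii), the bracket is compatible with Künneth and invariants. Hence
$$[\lambda\alpha_X+\beta,\lambda\alpha_X+\beta]=\lambda^2[\alpha_S,\alpha_S]+[\beta,\beta]+2\lambda[\alpha_S,\beta].$$
The term $[\beta,\beta]$ vanishes because curves are unobstructed. The mixed term lies in $H^1(S,T_S)\otimes H^1(B,\cO)$-type pieces, which are distinct Künneth summands of $H^2(X,T_X)$. The $\lambda^2$ term is nonzero, so the bracket vanishes iff $\lambda=0$. Directions with $\lambda=0$ are integrable by deforming $B$: $\Kur(X)$ is $\Kur(S)^G\times\Kur(B)$, which is smooth in the $B$-direction. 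Their IEDs integrate via the KE metrics on the deformed complex structures (Aubin-Yau).

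For (iii), we invoke Nagy's third-order formula \cite{NagyThird}. Under Koiso's isomorphism, an IED $h$ extending to third Einstein order forces the primary Kodaira-Spencer obstruction of the corresponding class to vanish. Since $[\lambda\alpha_X+\beta]^2\neq0$ for $\lambda\neq0$, these IEDs are obstructed at third order, even through non-Kähler Einstein curves. The main obstacle is the construction of the first step: finding $(S,G)$ with free action and the invariant obstructed class, while verifying $H^2(S,T_S)^G$ detects $[\alpha_S,\alpha_S]$. I would first try $S$ a quotient of a product of curves with an explicit obstructed class, computing the bracket by a cup-product/Wahl-type calculation and the characters equivariantly. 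Real dimensions then give $2+6=8$ and $6$.
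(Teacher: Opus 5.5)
\textbf{Verdict: genuine gap.} It sits exactly where you flag ``the main obstacle'': your set-up forces the freeness of the $G$-action to come from the surface, and you never supply such a surface.

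\emph{Option 1.} If $G$ acts on $S$ only, then $X=(S/G)\times B$ is a product, which contradicts (i).

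\emph{Option 2.} Take $G$ acting on $B$ through automorphisms that are trivial on $H^1(B,T_B)$. This fails for a different reason. We have $H^1(B,T_B)\cong\bigl(\mathrm{Sym}^2H^0(B,K_B)\bigr)^{\vee}$, so such an automorphism must act by $\pm1$ on $H^0(B,K_B)$. Hence it is the identity or the hyperelliptic involution $\iota$. Since $\iota$ has six fixed points, every element of $G$ must act freely on $S$.

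So you would need a surface with all of the following:
\begin{itemize}
\item $K_S$ ample, which you need for $K_X$ ample and for Aubin--Yau;
\item a free $G$-action;
\item $H^1(S,T_S)^G=\C\alpha_S$ with $[\alpha_S,\alpha_S]\neq0$.
\end{itemize}
The candidates you name do not provide this:
\begin{itemize}
\item Beauville surfaces have $H^1(S,T_S)=0$.
\item Free quotients of products of curves have smooth Kuranishi spaces, being fixed loci of finite groups on a smooth germ.
\item The B\"ohning--Graf von Bothmer--Pignatelli surface carries its $(\Z/7)^4$-action as the deck group of a totally ramified cover. That action is not free.
\end{itemize}

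\textbf{The missing idea} is to make $G$ act freely on the curve factor instead. The fundamental group of a genus-$2$ curve $B$ surjects onto $(\Z/7)^4$, so there is an \'etale Galois $G$-cover $C\to B$. Replace $B$ by $C$ in the product. Then:
\begin{itemize}
\item the diagonal action on $S\times C$ is free for any faithful action on $S$;
\item \'etale descent gives $H^1(C,T_C)^G\cong H^1(B,T_B)$;
\item $X=(S\times C)/G$ fibres isotrivially over $B$, with monodromy $\pi_1(B)\twoheadrightarrow G\hookrightarrow\Aut(S)$.
\end{itemize}

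This fibration is also what makes non-productness provable. Because $q(S)=0$, the Albanese map of $X$ factors through $X\to B$. Any product decomposition would then force this fibration to be a product projection, which has trivial monodromy. Your appeal to a ``de Rham/Albanese-type splitting of the universal cover'' is too vague to replace this argument. Moreover, in your $\iota$-twisted variant $\iota$ acts by $-1$ on $H^0(B,K_B)$, so $q(X)=q(S/G)$, which is $0$ when $q(S)=0$. The Albanese route would then not be available at all.

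The rest of your outline matches the paper's argument:
\begin{itemize}
\item the K\"unneth bracket computation;
\item integrability of the $\beta$-directions by deforming the curve;
\item the appeal to Nagy's third-order equation via Koiso's isomorphism.
\end{itemize}
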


The threefold has an \'etale cover that is a product, but is not itself a product. The obstruction is inherited from the surface factor, while the free action on the curve makes the diagonal quotient smooth. 

\textbf{Acknowledgments.} We thank Tristan Ozuch, Dhruv Goel, and Laasya Nagumalli for helpful conversations and proofreading. 

\section{Constructing the threefold}

We write $q(S)=h^1(S,\cO_S)$.

\begin{proposition}\label{prop:surface}
There exist a smooth projective surface $S$, a faithful action of $G\cong(\Z/7)^4$
on $S$, and a class $\alpha\in H^1(S,T_S)^G$ such that
$$K_S\ \text{is ample},\qquad q(S)=0,\qquad
H^1(S,T_S)=\C\alpha,$$
and
$$
[\alpha,\alpha]\neq0\quad\text{in }H^2(S,T_S).$$
\end{proposition}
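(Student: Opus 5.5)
The surface we want is a Catanese-type hypersurface in a fake weighted projective setting; concretely, we take the ``rigid but not infinitesimally rigid'' surfaces of Bauer--Pignatelli, which arise as quotients of Fermat-type surfaces. Let
$$F=\{x_0^7+x_1^7+x_2^7+x_3^7=0\}\subset\PP^3$$
be the Fermat septic. It carries the natural action of $(\Z/7)^4\cong\mu_7^4/\mu_7\times\cdots$; more precisely, $\mu_7^4/\Delta\cong(\Z/7)^3$ acts by scaling coordinates. Choosing the model carefully (a product-quotient or Fermat-quotient presentation so that the full symmetry group is $(\Z/7)^4$), we set $S=F/H$ or a suitable smooth model, with $G$ the residual abelian group. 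The plan is to recall the Bauer--Pignatelli example, a surface $S$ of general type with $K_S$ ample, $q=0$, $h^1(T_S)=1$, and rigid (Kuranishi space a fat point). We then check that the $G$-action is faithful and that the unique class is $G$-invariant.

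Given $S$ and an abelian $G$ acting on it, $H^1(S,T_S)=\C$ is a one-dimensional representation. We choose $G$ (or pass to its kernel on this line) so that $\alpha$ is invariant. The ampleness of $K_S$ follows from the absence of $(-2)$-curves, which is checked on the explicit quotient model. We get $q(S)=0$ from $H^1(\cO_S)=H^1(\cO_F)^H=0$, because $F\subset\PP^3$ is a hypersurface.

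The key nontrivial point is $[\alpha,\alpha]\neq0$. We use the Bauer--Pignatelli result: $S$ is rigid, meaning its Kuranishi space is $\Spec\C[t]/(t^n)$ with $n\ge2$ a fat point, while $h^1(T_S)=1$. If the primary obstruction $[\alpha,\alpha]$ vanished, the Kuranishi map would start at order $\geq3$. So we need the obstruction to be exactly quadratic, i.e.\ $\Kur(S)\cong\Spec\C[t]/(t^2)$. We establish this by computing $H^1(T_S)$ and $H^2(T_S)$ as eigenspaces of the Jacobian ring of the Fermat polynomial, using Griffiths' residue description for hypersurfaces together with $H$-invariants. In the Jacobian ring $R=\C[x]/(x_i^6)$, the class $\alpha$ corresponds to a monomial $m$ of degree $7$, and $[\alpha,\alpha]$ corresponds (up to nonzero constant) to $m^2$ in the appropriate invariant graded piece, via the cup product $H^1(T)\times H^1(T)\to H^2(T)$ computed by multiplication in $R$ (Griffiths/Carlson). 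Since $m$ has all exponents $\le 2$ or so, $m^2\neq0$ in $R$, which is nonzero exactly when all exponents of $m^2$ are $\le5$.

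The main obstacle is this last step: making the Jacobian-ring description of the bracket valid on the quotient (or its resolution), which has cyclic quotient singularities unless $H$ acts freely. I would first try to arrange that $H$ acts freely so that $S$ is smooth and étale-covered by $F$. In that case the computation is honest $H$-invariant bookkeeping, and nonvanishing of the invariant bracket upstairs implies nonvanishing downstairs, since $H^2(S,T_S)=H^2(F,T_F)^H$.
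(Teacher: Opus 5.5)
There is a genuine gap, and it rules out the route you chose: a free quotient of the Fermat septic can never satisfy $[\alpha,\alpha]\neq0$. Since $H^1(F,N_F)=H^1(F,\cO_F(7))=0$ and $H^1(F,T_{\PP^3}|_F)=0$, every small deformation of $F$ is again a septic, so $\Kur(F)$ is smooth. By Lemma~\ref{lem:primary} and polarization, the bracket $H^1(F,T_F)\times H^1(F,T_F)\to H^2(F,T_F)$ then vanishes identically. If $H$ acts freely with quotient map $\pi\colon F\to S$, then $\pi^*[\alpha,\alpha]=[\pi^*\alpha,\pi^*\alpha]=0$. Since $\pi^*$ is injective on $H^2$, this forces $[\alpha,\alpha]=0$; equivalently, $\Kur(S)\cong\Kur(F)^H$ is smooth. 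So your closing step proves the opposite of what you need. The Griffiths--Carlson multiplication in the Jacobian ring computes the cup products $H^1(F,T_F)\otimes H^{p,q}(F)\to H^{p-1,q+1}(F)$ of the infinitesimal variation of Hodge structure, not the Kodaira--Spencer bracket. There is no rule sending $[\alpha,\alpha]$ to $m^2$.

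The other requirements fail as well. All automorphisms of $F$ are linear, and elements of order $2$ or $3$ in $\mathrm{PGL}_4(\C)$ always fix a line, which meets $F$. So a freely acting $H$ is a $7$-group, hence lies in the diagonal group $\mu_7^4/\mu_7\cong(\Z/7)^3$. Any subgroup of order $49$ there contains a nontrivial element with two equal weights, and that element fixes a coordinate line. Thus only $H\cong\Z/7$ acts freely, and then $h^1(S,T_S)=\dim(R_7)^H=14$, not $1$. As $F$ is simply connected, $\Aut(S)=N_{\Aut(F)}(H)/H$ has $7$-part at most $(\Z/7)^2$, so no faithful $(\Z/7)^4$-action appears. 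Passing to the kernel of the character on $H^1(S,T_S)$ would only shrink $G$ further. Non-free quotients, which you leave open, introduce exceptional curves and break the Jacobian-ring bookkeeping.

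The missing idea is that the obstruction must come from something an \'etale quotient of an unobstructed variety cannot see. The paper does no computation. It takes the surface of B\"ohning--Graf von Bothmer--Pignatelli: a totally ramified abelian $(\Z/7)^4$-cover with $K_S$ ample, $q(S)=0$ and $\Kur(S)\cong\Spec\C[x]/(x^2)$. Their comparison with the branch-data deformation functor gives $H^1(S,T_S)=H^1(S,T_S)^G$ for the full group. Then $[\alpha,\alpha]\neq0$ follows because $x\mapsto at$ does not lift to $\C[t]/(t^3)$. Your remark that $[\alpha,\alpha]\neq0$ is equivalent to $\Kur(S)\cong\Spec\C[t]/(t^2)$ is correct and is exactly this last step; what your plan lacks is a surface that actually has this Kuranishi space.
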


\begin{proof}
B\"ohning, Graf von Bothmer, and Pignatelli construct a smooth surface $S$ with
$$\Kur(S)\cong\Spec\C[x]/(x^2),$$
$K_S$ ample, and $q(S)=0$ \cite[Thm.~1.1 and Thms.~6.3, 6.5]{BVP}. Their surface is a totally ramified abelian cover whose deck group is $G\cong(\Z/7)^4$ \cite[Cor.~5.4]{BVP}. The natural transformation of \cite[Lem.~2.11]{BVP} identifies the tangent space of the branch-data deformation functor with $H^1(S,T_S)^G$. The verification in the proof of \cite[Thm.~5.5]{BVP}, using \cite[Thm.~3.19]{BVP}, of the hypotheses of \cite[Cor.~2.13]{BVP} shows that this transformation is an isomorphism. Hence,
$$H^1(S,T_S)=H^1(S,T_S)^G.$$

The aforementioned description of $\Kur(S)$ gives $\dim_{\C}H^1(S,T_S)=1$; let $\alpha$ be a generator. Since $K_S$ is ample, $H^0(S,T_S)=0$, so the deformation functor is prorepresentable. The first-order deformation associated with $\alpha$ is induced by
$$\C[x]/(x^2)\longrightarrow\C[t]/(t^2),
\qquad x\longmapsto at,$$
for some $a\neq0$. A lift to $\C[t]/(t^3)$ would have to send $x$ to $at+bt^2$, but
$$(at+bt^2)^2=a^2t^2\neq0\pmod{t^3}.$$
It follows that $\alpha$ has no second-order lift. By the primary obstruction criterion recalled in Lemma~\ref{lem:primary}, this is equivalent to $[\alpha,\alpha]\neq0$.
\end{proof}

Let $B$ be a smooth projective curve of genus $2$. Its fundamental group surjects onto $G$: in the standard presentation, send the four generators of $\pi_1(G)$ to a basis of the abelian group $G$; the surface-group relation maps to zero. By the Riemann existence theorem, the resulting connected topological cover carries a unique structure of a finite \'etale Galois cover
$$q\colon C\longrightarrow B$$
with Galois group $G$. Then, Riemann-Hurwitz gives that the genus of $C$ is 
$$g(C)=1+|G|\bigl(g(B)-1\bigr)=1+7^4=2402.$$
Note that the diagonal action of $G$ on $S\times C$ is free. With this in hand, let
$$X=(S\times C)/G$$
and denote the quotient map by $\pi\colon S\times C\to X$.

We turn to examining the deformation theory of this quotient. Let $p_S$ and $p_C$ be the two projections.

\begin{lemma}[Descent and splitting]\label{lem:descent}
There are natural isomorphisms
$$H^1(X,T_X)\cong H^1(S\times C,T_{S\times C})^G
\cong H^1(S,T_S)^G\oplus H^1(C,T_C)^G.
$$
Moreover, \'etale descent along $q$ identifies
$$
H^1(C,T_C)^G\cong H^1(B,T_B).
$$
If $\alpha_X\in H^1(X,T_X)$ is defined by
$$
\pi^*\alpha_X=p_S^*\alpha,
$$
then, for $\lambda\in\C$ and $\beta\in H^1(B,T_B)$,
\begin{equation}\label{eq:bracket-splitting}
\pi^*\bigl[\lambda\alpha_X+\beta,\lambda\alpha_X+\beta\bigr]
=\lambda^2p_S^*[\alpha,\alpha].
\end{equation}
In particular, the bracket on the left-hand side is nonzero if and only if $\lambda\neq0$.
\end{lemma}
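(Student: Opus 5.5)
The plan is to work equivariantly on the étale cover $\pi\colon S\times C\to X$ and then descend. Since $G$ acts freely on $S\times C$, $\pi$ is a finite étale Galois cover, so $\pi^*T_X\cong T_{S\times C}$ as $G$-equivariant bundles. For any $G$-equivariant coherent sheaf $\mathcal F$ on $S\times C$, we have $H^i(X,(\pi_*\mathcal F)^G)=H^i(S\times C,\mathcal F)^G$, because taking invariants is exact in characteristic zero for a finite group. Applying this to $\mathcal F=T_{S\times C}$, for which $(\pi_*T_{S\times C})^G=T_X$, gives the first isomorphism. The same argument for the free action on $C$ with quotient $B$ gives $H^1(C,T_C)^G\cong H^1(B,T_B)$. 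Both isomorphisms are given by $\pi^*$ and $q^*$ respectively, and they are compatible with the Schouten (Frölicher--Nijenhuis) bracket, since pullback along an étale map commutes with brackets of vector-valued $(0,1)$-forms, or equivalently of Čech cocycles.

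For the splitting, write $T_{S\times C}=p_S^*T_S\oplus p_C^*T_C$ and apply Künneth:
$$H^1(S\times C,p_S^*T_S)=H^1(S,T_S)\otimes H^0(C,\cO_C)\oplus H^0(S,T_S)\otimes H^1(C,\cO_C).$$
Here $H^0(S,T_S)=0$ because $K_S$ is ample. Symmetrically,
$$H^1(S\times C,p_C^*T_C)=H^0(S,\cO_S)\otimes H^1(C,T_C)\oplus H^1(S,\cO_S)\otimes H^0(C,T_C),$$
and both $q(S)=0$ and $H^0(C,T_C)=0$ (as $g(C)\ge2$) kill the second summand. The decomposition is $G$-equivariant, and $G$ acts trivially on $H^0(\cO)$, so taking invariants gives the stated sum. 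Under this identification the summand $H^1(S,T_S)^G=H^1(S,T_S)=\C\alpha$ (Proposition~\ref{prop:surface}) is embedded by $p_S^*$, so $\alpha_X$ is well defined: it is the unique class with $\pi^*\alpha_X=p_S^*\alpha$. Similarly, $\beta\in H^1(B,T_B)$ corresponds to $p_C^*q^*\beta$.

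For the bracket, pull back by $\pi^*$ and expand bilinearly:
$$\lambda^2[p_S^*\alpha,p_S^*\alpha]+2\lambda[p_S^*\alpha,p_C^*q^*\beta]+[p_C^*q^*\beta,p_C^*q^*\beta].$$
Choose Dolbeault representatives that are pulled back from each factor. The mixed bracket then vanishes identically, since vector fields in the $S$ and $C$ directions with coefficients depending only on the respective factor commute. The pure terms are $p_S^*[\alpha,\alpha]$ and $p_C^*[q^*\beta,q^*\beta]$. The latter lies in $H^2(C,T_C)=0$, because $C$ is a curve. This proves \eqref{eq:bracket-splitting}.

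For the final claim, $p_S^*\colon H^2(S,T_S)\to H^2(S\times C,T_{S\times C})$ is injective, since it is the Künneth component tensored with $H^0(C,\cO_C)=\C$ (alternatively, restrict to a slice $S\times\{c\}$). Moreover, $\pi^*$ on $H^2$ is injective onto the invariants by the descent above. Together with $[\alpha,\alpha]\neq0$, this shows the bracket is nonzero exactly when $\lambda\neq0$.

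The main obstacle I expect is making the compatibility of the bracket with the Künneth decomposition and with étale pullback rigorous at the level of classes rather than representatives. I would handle this with explicit Dolbeault representatives, using product metrics and harmonic representatives pulled back from the factors, where everything reduces to a pointwise computation.
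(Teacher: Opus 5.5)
Your proposal is correct and follows essentially the same route as the paper. Both arguments use invariant descent along the étale cover (exactness of $G$-invariants, with pullback compatible with the bracket), the Künneth splitting via $H^0(S,T_S)=H^0(C,T_C)=0$, and vanishing of the mixed and curve brackets via representatives pulled back from the factors. Both then conclude with injectivity of $p_S^*$ on $H^2$ by Künneth together with injectivity of $\pi^*$ onto invariants.
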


\begin{proof}
Pullback identifies the Kodaira-Spencer differential graded Lie algebra of $X$ with the invariant sub-DGLA on $S\times C$. Since the group-invariants functor under a finite group is exact in characteristic zero, we get
$$H^i(X,T_X)\cong H^i(S\times C,T_{S\times C})^G.$$
Now,
$$T_{S\times C}=p_S^*T_S\oplus p_C^*T_C.$$
The K\"unneth formula, in conjunction with $H^0(S,T_S)=H^0(C,T_C)=0$, yields
$$H^1(S\times C,T_{S\times C})
=H^1(S,T_S)\oplus H^1(C,T_C).$$
The asserted identification for $C\to B$ arises from the same \'etale descent argument.

Let $\widetilde\beta\in H^1(C,T_C)^G$ be the pullback of $\beta$. Classes pulled back from different factors have zero Kodaira-Spencer bracket. Moreover,
$$[\widetilde\beta,\widetilde\beta]=0,$$
because it is represented by a $(0,2)$-form on a curve. This proves \eqref{eq:bracket-splitting}. Finally, the K\"unneth decomposition
\begin{align*}
H^2(S\times C,T_{S\times C})={}&H^2(S,T_S)
\oplus\bigl(H^1(S,T_S)\otimes H^1(C,\cO_C)\bigr)\\
&\oplus\bigl(H^1(C,T_C)\otimes H^1(S,\cO_S)\bigr)
\end{align*}
shows that $p_S^*\colon H^2(S,T_S)\to H^2(S\times C,T_{S\times C})$ is injective. Since $[\alpha,\alpha]$ is $G$-invariant, it remains nonzero after invariant descent. The last assertion follows.
\end{proof}

\section{From complex obstructions to Einstein obstructions}

We first recall the primary complex obstruction. For a compact complex manifold $Z$, its Kodaira-Spencer DGLA is
$$
L_Z=\bigl(A^{0,\bullet}(Z,T_Z),\dbar,[\ ,\ ]\bigr).$$
It governs the Kuranishi deformation functor via the Maurer-Cartan equation
$$\dbar\phi+\half[\phi,\phi]=0;$$
consult \cite{Kodaira,Kuranishi,ManettiDGLA,Sernesi}for details.

\begin{lemma}[Primary obstruction]\label{lem:primary}
Let $Z$ be a compact complex manifold and $\xi\in H^1(Z,T_Z)$. The class $\xi$ admits a second-order lift if and only if
$$[\xi,\xi]=0\quad\text{in }H^2(Z,T_Z).$$
In particular, a class with nonzero bracket is not integrable. The construction can be shown to be functorial under morphisms of Kodaira-Spencer DGLAs.
\end{lemma}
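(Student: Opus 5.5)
The plan is the standard Maurer--Cartan computation in the Kodaira--Spencer DGLA $L_Z$. A first-order deformation over $\C[t]/(t^2)$ in the direction $\xi$ is given by $\phi=t\phi_1$ with $\phi_1\in A^{0,1}(Z,T_Z)$ a $\dbar$-closed representative of $\xi$. A lift to $\C[t]/(t^3)$ is then an element $\phi=t\phi_1+t^2\phi_2$ satisfying $\dbar\phi+\half[\phi,\phi]=0$ modulo $t^3$. This lift is taken up to gauge equivalence, and the gauge action only changes $\phi_1$ by $\dbar$-exact terms. Expanding, the $t$-coefficient gives $\dbar\phi_1=0$, and the $t^2$-coefficient gives
$$\dbar\phi_2=-\half[\phi_1,\phi_1].$$

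So the argument reduces to three checks, which I will carry out in order. First, $[\phi_1,\phi_1]$ is $\dbar$-closed, because $\dbar$ is a derivation of the bracket and $\dbar\phi_1=0$. Hence it defines a class in $H^2(Z,T_Z)$ via Dolbeault's isomorphism. Second, this class depends only on $\xi$. If I replace $\phi_1$ by $\phi_1+\dbar\eta$, then
$$[\phi_1+\dbar\eta,\phi_1+\dbar\eta]-[\phi_1,\phi_1]=\dbar\bigl(2[\eta,\phi_1]+[\eta,\dbar\eta]\bigr),$$
using the graded Jacobi/Leibniz rules and the degree signs, with $\eta$ of degree $0$. Third, the $t^2$ equation is solvable for some $\phi_2$ exactly when $[\phi_1,\phi_1]$ is $\dbar$-exact. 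This gives both directions: if $[\xi,\xi]=0$ I choose $\phi_2$ solving the equation, and if a lift exists then $[\phi_1,\phi_1]$ is exact. I also need to say that an abstract second-order lift of the deformation corresponds to such a Maurer--Cartan element. This is the statement that the Kodaira--Spencer DGLA governs deformations of $Z$ (Kodaira--Spencer, Kuranishi, and Manetti's DGLA formalism), which I will cite rather than reprove. The "in particular" clause follows at once: an integrable $\xi$ is tangent to an actual family, and restricting that family to the second-order neighbourhood gives a lift.

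For functoriality, let $f\colon L\to L'$ be a morphism of DGLAs. It commutes with $\dbar$ and with brackets, so it sends Maurer--Cartan solutions to Maurer--Cartan solutions, and
$$f\bigl([\phi_1,\phi_1]\bigr)=[f\phi_1,f\phi_1].$$
Therefore the induced map on $H^2$ carries $[\xi,\xi]$ to $[f_*\xi,f_*\xi]$. This is what is used with the pullbacks $\pi^*$ and $p_S^*$ in Lemma~\ref{lem:descent}.

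I expect the main care to be needed in the sign and normalization conventions, namely the factor $\half$ and the graded bracket on odd elements, so that $[\phi_1,\phi_1]$ does not vanish trivially. I will also need to make sure that the notion of "second-order lift" matches the one used in the proof of Proposition~\ref{prop:surface}, which is prorepresentability with $H^0(Z,T_Z)=0$. Neither point is a real obstacle.
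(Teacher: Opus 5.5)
Your proposal is correct and follows the same route as the paper. Both expand $\dbar\phi+\half[\phi,\phi]=0$ for $\phi=t\phi_1+t^2\phi_2$, read off $\dbar\phi_2=-\half[\phi_1,\phi_1]$, and get functoriality from a DGLA morphism commuting with $\dbar$ and the bracket. Your extra checks are valid: that $[\phi_1,\phi_1]$ is $\dbar$-closed, and that its class does not depend on the representative, via $\dbar\bigl(2[\eta,\phi_1]+[\eta,\dbar\eta]\bigr)$. They justify a step the paper leaves implicit, namely that the first-order term of the lift may be taken to be the chosen representative.
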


\begin{proof}
Choose a $\dbar$-closed representative, which we will continue to denote by $\xi$. A second-order Maurer-Cartan lift has the form
$$\phi(t)=t\xi+t^2\phi_2\pmod{t^3}.$$
The coefficient of $t^2$ in the Maurer-Cartan equation is
$$\dbar\phi_2+\half[\xi,\xi]=0.$$
Such a $\phi_2$ exists exactly when the cohomology class of $[\xi,\xi]$ vanishes. Functoriality follows from compatibility with the differential and bracket.
\end{proof}

Next, if $(M,g,J)$ is compact K\"ahler-Einstein with negative Einstein constant, Koiso's results give a real-linear isomorphism
\begin{equation}\label{eq:koiso}
\kappa_g\colon H^1(M,T_M)_{\R}\iso\varepsilon(g),
\end{equation}
where $\varepsilon(g)$ denotes the real vector space of IEDs \cite[Lem.~6.6, Props.~7.3, 8.2-8.3, and Lem.~9.3]{Koiso}. Here, the subscript means that the complex vector space is regarded as a real one; the correspondence is unchanged by a constant rescaling of the metric.

\begin{proposition}\label{prop:bridge}
Let $(M,g,J)$ be a compact K\"ahler-Einstein manifold with $\Ric_g=Eg$ and $E<0$. If $\xi\in H^1(M,T_M)$ satisfies
$$[\xi,\xi]\neq0\quad\text{in }H^2(M,T_M),$$
then the IED $\kappa_g(\xi)$ is not integrable. More precisely, it cannot be extended to third order as an Einstein deformation.
\end{proposition}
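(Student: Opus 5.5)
The plan is to reduce the statement to Nagy's third-order analysis \cite{NagyThird}, combined with the second-order integrability of Nagy--Semmelmann \cite[Thm.~5.3]{NagySemmelmann}. We argue by contradiction. Suppose $h=\kappa_g(\xi)$ extends to third order, i.e.\ there are symmetric $2$-tensors $h_2,h_3$ such that
$$g(t)=g+th+t^2h_2+t^3h_3$$
satisfies $\Ric_{g(t)}-Eg(t)=O(t^4)$ after fixing a gauge (Bianchi or divergence-free gauge, with volume normalization to kill homotheties). Rescaling does not affect $\kappa_g$, so we may normalize $E=-1$.

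\textbf{Second order.} By \cite[Thm.~5.3]{NagySemmelmann} the $t^2$ equation is always solvable. Following Nagy, we decompose $h_2$ into a part determined by $h$ plus an element of $\varepsilon(g)$. We do this by splitting symmetric tensors into $J$-invariant and $J$-anti-invariant parts. Under Koiso's identification, the $J$-anti-invariant part of an IED corresponds to a harmonic representative $\xi\in A^{0,1}(T_M)$ via $h(X,Y)=g(\xi X,Y)$ (up to conjugation and $\mathrm{Re}$). The $J$-invariant part corresponds to a harmonic real $(1,1)$-form, which vanishes in the relevant trace-free sense when $E<0$.

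\textbf{Third order.} Next we take the $L^2$-projection of the $t^3$ coefficient of the Einstein equation onto $\varepsilon(g)$. The linearized operator is self-adjoint, so this projection must vanish for a solution $h_3$ to exist. Nagy's third-order formula expresses this projection as a cubic form $\Psi(h,h,h)$ plus terms linear in the free part of $h_2$. We pair the projection against a suitably chosen test IED $k\in\varepsilon(g)$. The step I expect to be the main obstacle is identifying this cubic obstruction, paired with $k$, with the pairing
$$\bigl\langle H[\xi,\xi],\,\eta\bigr\rangle_{L^2},$$
where $H$ denotes harmonic projection and $\eta$ is a class related to $\xi$ via Serre duality and the K\"ahler-Einstein structure (for instance $\eta=\bar\partial^*$-related contraction of $\xi$ with the Kähler form). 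Here I would invoke precisely the statement quoted in the introduction, namely that Nagy's computation shows a nonzero primary Kodaira--Spencer obstruction forbids third-order extension, rather than recompute it. The point to verify is that $H[\xi,\xi]\neq0$ allows a choice of $k$, and of the free second-order correction, for which the obstruction cannot be cancelled. Concretely, we use the Kähler identities and the Weitzenböck formula on $A^{0,q}(T_M)$ with negative Ricci curvature. These show that harmonic $T_M$-valued forms correspond to IEDs and that the bracket term is the only surviving contribution after the linear $h_2$-terms are absorbed.

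\textbf{Conclusion.} Since $[\xi,\xi]\neq0$ in $H^2(M,T_M)$, its harmonic part is nonzero. By Serre duality there is a test class pairing nontrivially with it, so the projection of the third-order equation is nonzero and no $h_3$ exists. Hence $h$ does not extend to third order. Any smooth curve of Einstein metrics tangent to $h$ modulo diffeomorphisms and homotheties could be put in gauge by the slice theorem, giving such a Taylor expansion. Therefore $\kappa_g(\xi)$ is not integrable.
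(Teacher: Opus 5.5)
Your proposal has a genuine gap at the step you yourself flag as the main obstacle. You close it by invoking ``the statement quoted in the introduction,'' but that statement is Proposition~\ref{prop:bridge} itself, so as written the argument is circular. The content has to come from a specific result in \cite{NagyThird}.

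The mechanism you sketch in its place also does not work. Your obstruction is the $\varepsilon(g)$-component of the $t^3$ coefficient, i.e.\ the linear functional $k\mapsto\langle\Psi(h,h,h),k\rangle$ on $\varepsilon(g)\cong H^1(M,T_M)_{\R}$. But $[\xi,\xi]$ lives in $H^2(M,T_M)$. Serre duality produces a class in $H^{n-2}(M,\Omega^1_M\otimes K_M)$, $n=\dim_{\C}M$ (equivalently, $H[\xi,\xi]$ itself under the $L^2$ pairing), that pairs nontrivially with $[\xi,\xi]$. It does not produce an IED $k$, and you never specify how $\eta$ depends on $k$. So the chain ``$[\xi,\xi]\neq0$, hence some test class pairs nontrivially, hence the projection is nonzero'' is a non sequitur. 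To make the cokernel route work you would need an explicit identity such as $\langle\Psi(h,h,h),h\rangle=c\,\|H[\xi,\xi]\|^2_{L^2}$ with a universal $c\neq0$ (in effect, the quartic term of a reduced Einstein--Hilbert functional). Nothing in the proposal establishes this. Your absorption of the free part of $h_2$ is fine, by polarizing the vanishing second-order obstruction of \cite[Thm.~5.3]{NagySemmelmann}, but it does not touch the real issue.

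The paper never projects onto the cokernel. It first rescales the jet to constant volume. Stationarity of the total scalar curvature on the fixed-volume slice then keeps the Einstein constant equal to $E$ to the needed order; you simply assume this. Because $h$ is already divergence-free, Nagy's gauge-fixing diffeomorphism can be chosen with trivial first jet, so the first-order term is still exactly $h=\kappa_g(\xi)$.

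The paper then uses Nagy's third-order equation $\mathcal B(h_2-h^2)+[h,h]^c=0$ \cite[Thm.~1.2(ii)(b)]{NagyThird}. This says directly that $[h,h]^c$ is $\mathcal B$-exact, so its class vanishes. Nagy identifies the complexification of that class with a nonzero universal multiple of $[\xi,\xi]$, which gives the contradiction. In effect, a third-order Einstein jet forces a second-order Maurer--Cartan solution, and no test direction or duality pairing is needed.
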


\begin{proof}
Write $h=\kappa_g(\xi)$ and suppose, more generally, that $h$ extends to an Einstein deformation through third order. All the normalizations and equations below depend only on the $3$-jet \cite[Thm.~5.12]{NagyThird}; in particular, they apply to the jet of every actual Einstein curve. Rescale the jet to have the same total volume as $g$. Since $h$ is trace-free,
$$\left.\frac{d}{dt}\right|_{t=0}\Vol(g_t)
=\half\int_M\operatorname{tr}_g(h)\,dV_g=0,$$
so this rescaling does not change the tangent vector. The Einstein constant is then $E$ to the required order. Indeed, on the fixed-volume hypersurface , the Einstein-Hilbert functional is stationary at every Einstein metric, whereas its value at $g_t$ is $(\dim M)E_t\Vol(g)$.

We use $g$ to identify symmetric $2$-tensors with self-adjoint endomorphisms. Since $h$ is already divergence-free, the normalizing diffeomorphism in the proof of \cite[Prop.~3.2]{NagyThird} may be chosen with trivial first jet. Thus, the normalization in \cite[Thm.~1.2(i)]{NagyThird} preserves $h$, and we may write
$$g^{-1}g_t=\operatorname{id}+th+\frac{t^2}{2}h_2+\frac{t^3}{6}h_3+O(t^4).$$
Nagy's third-order Einstein equation \cite[Thm.~1.2(ii)(b)]{NagyThird} implies
\begin{equation}\label{eq:nagy}
\mathcal B(h_2-h^2)+[h,h]^c=0.
\end{equation}
Here, $\mathcal B$ is the real form of the Dolbeault operator and $[\ ,\ ]^c$ is the real Kodaira-Spencer bracket. Consequently, the cohomology class of $[h,h]^c$ vanishes. Nagy identifies the complexification of this real class with the primary Dolbeault obstruction in $H^{0,2}(M,T_M)$; under Koiso's correspondence, it is a nonzero universal scalar multiple of $[\xi,\xi]$ \cite[\S2.2 and the discussion following Thm.~1.2]{NagyThird}. Hence, $[\xi,\xi]=0$, a contradiction.
\end{proof}

\section{Proof of the main theorem}

We now prove all the assertions of Theorem~\ref{thm:main} in one fell swoop.

\subsection{The metric and its infinitesimal deformations}

The diagonal action on $S\times C$ is free, so $X$ is smooth and projective and $\pi$ is finite \'etale. Moreover,
$$\pi^*K_X=K_{S\times C}=p_S^*K_S\otimes p_C^*K_C$$
is ample. Ampleness descends under a finite surjection, so $K_X$ is ample.

Let $g_S$ and $g_C$ be the unique K\"ahler-Einstein metrics normalized by
$$\Ric_{g_S}=-g_S,
\qquad
\Ric_{g_C}=-g_C;$$
their existence is guaranteed by the Aubin-Yau theorem \cite{Aubin,Yau}. Every element of $G$ is a holomorphic automorphism of $S$ or $C$, so uniqueness makes it an isometry of the corresponding normalized metric. Hence, the product metric $g_S\oplus g_C$ is $G$-invariant, and therefore descends to a metric $g$ on $X$. Since
$$\Ric_{g_S\oplus g_C}=-(g_S\oplus g_C),$$
the descended metric satisfies $\Ric_g=-g$. It is the unique normalized K\"ahler-Einstein metric on $X$.

Proposition~\ref{prop:surface} and Lemma~\ref{lem:descent} give
$$H^1(X,T_X)=\C\alpha_X\oplus H^1(B,T_B).$$
Since $g(B)=2$,
$$\dim_{\C}H^1(B,T_B)=3,
\qquad
\dim_{\C}H^1(X,T_X)=4.$$
Koiso's isomorphism \eqref{eq:koiso} therefore gives
$$\dim_{\R}\varepsilon(g)=8.$$

Let $\xi=\lambda\alpha_X+\beta$. If $\lambda\neq0$, Lemma~\ref{lem:descent} gives
$$[\xi,\xi]\neq0.$$
Lemma~\ref{lem:primary} shows that $\xi$ is a non-integrable ICD, and Proposition~\ref{prop:bridge} shows that $\kappa_g(\xi)$ is a non-integrable IED, obstructed at third Einstein order.

It remains to show that every direction with $\lambda=0$ is integrable in both senses. Every $\beta\in H^1(B,T_B)$ is tangent to a curve of complex structures $B_t$, because the deformation space of a smooth curve is unobstructed. Now keep the underlying topological $G$-cover $C\to B$ fixed, and pull back the complex structures on $B_t$; this produces a curve of \'etale holomorphic $G$-covers $C_t\to B_t$. The quotients
$$X_t=(S\times C_t)/G$$
form a curve of canonically polarized complex manifolds tangent to $\beta$. Thus, $\beta$ is an integrable ICD. By the parametric Aubin-Yau theorem, or equivalently Koiso's deformation theorem in the negative case, the normalized K\"ahler-Einstein metrics on $X_t$ vary smoothly \cite[Prop.~10.1]{Koiso}. Their tangent can be prescribed to represent $\kappa_g(\beta)$, so this IED is integrable, as was sought.

\subsection{Non-productness}

It remains to show that $X$ is not biholomorphic to a product. Though Schwann-Semmelmann's survey does not necessarily mandate this, we remark that the problem is only interesting when this condition is imposed.

To this end, first note that the map $q\circ p_C\colon S\times C\to B$ descends to a smooth isotrivial fibration
$$f\colon X\longrightarrow B$$
whose fibers are isomorphic to $S$. Its monodromy is the composite
$$\pi_1(B)\twoheadrightarrow G\hookrightarrow\Aut(S).$$
The first map is surjective and the second is faithful, so this monodromy is nontrivial. Since $K_S$ is ample, $\Aut(S)$ is finite; thus, the monodromy is well-defined up to conjugacy, and a product fibration would have trivial monodromy.

Since $q(S)=0$, invariant descent and K\"unneth give
$$H^0(X,\Omega_X^1)
=H^0(S\times C,\Omega^1_{S\times C})^G
=H^0(C,\Omega_C^1)^G
=H^0(B,\Omega_B^1).$$
Consequently, the Albanese map $a_X\colon X\to\Alb(X)$ is constant on the fibers of $f$ and factors as
$$a_X=j\circ f$$
for a nonconstant map $j\colon B\to\Alb(X)$. The map $j$ is finite onto its image. Since $f$ has connected fibers, it is the Stein factorization of $a_X$. In particular, the Albanese image of $X$ is one-dimensional.

Suppose that $X\cong A\times D$ with both factors positive-dimensional. Any decomposition with more factors may of course be grouped in this form. Restricting $K_X$ to the fibers of the two projections shows that $K_A$ and $K_D$ are ample, so both factors are projective. The Albanese image of a product is the product of the Albanese images. Since the Albanese image of $X$ is a curve, after interchanging the factors we may assume that $q(A)=0$ and that the Albanese map depends only on $D$. For fixed $d\in D$, the connected set $f(A\times\{d\})$ lies in the finite fiber of $j$ over the point $a_X(A\times\{d\})$, and hence it is a point. Thus, $f$ factors through the projection to $D$: there is a holomorphic map $h\colon D\to B$ such that
$$f=h\circ p_D.$$

If $\dim D=1$, then $h$ is finite. Connectedness of the fibers of $f$ forces $h$ to have degree $1$, so $h$ is an isomorphism. The fibration $f$ is then a product projection $A\times B\to B$ and has trivial monodromy, contradicting the monodromy computed above. If $\dim D=2$, then $A$ is a curve with $q(A)=0$, hence, $A\cong\PP^1$, contradicting ampleness of $K_A$. The remaining possibility would make one factor a point. Thus, $X$ has no nontrivial product decomposition.

This completes the proof of Theorem~\ref{thm:main}.

\end{document}